\theoremstyle{plain} 
\newtheorem{thm}{Theorem}
\theoremstyle{definition}
\theoremstyle{remark} 
\newtheorem*{astep}{A-step}
\newtheorem*{pstep}{P-step}
\newtheorem*{cstep}{C-step}
\newcommand{\prob}{\mathsf{P}}
\newcommand{\bel}{\mathsf{bel}}
\newcommand{\pl}{\mathsf{pl}}
\newcommand{\mpl}{\mathsf{mpl}}
\newcommand{\mbel}{\mathsf{mbel}}
\newcommand{\bin}{{\sf Bin}}
\newcommand{\unif}{{\sf Unif}}
\newcommand{\nm}{{\sf N}}
\newcommand{\chisq}{{\sf ChiSq}}
\newcommand{\RR}{\mathbb{R}}
\newcommand{\U}{\mathscr{U}}
\newcommand{\XX}{\mathbb{X}}
\newcommand{\UU}{\mathbb{U}}
\newcommand{\xbar}{\bar{x}}
\newcommand{\G}{\mathscr{G}}
\newcommand{\Gbar}{\overline{\mathscr{G}}}
\newcommand{\Gtilde}{\widetilde{\mathscr{G}}}
\renewcommand{\S}{\mathcal{S}}
\renewcommand{\SS}{\mathbb{S}}
\title{A mathematical characterization of confidence \\ as valid belief}
\author{Ryan Martin \\
Department of Statistics \\
North Carolina State University \\
\url{rgmarti3@ncsu.edu} 
}
\date{\today}
\begin{document}

\maketitle 

\begin{abstract}    
Confidence is a fundamental concept in statistics, but there is a tendency to misinterpret it as probability.  In this paper, I argue that an intuitively and mathematically more appropriate interpretation of confidence is through belief/plausibility functions, in particular, those that satisfy a certain validity property. Given their close connection with confidence, it is natural to ask how a valid belief/plausibility function can be constructed directly.  The inferential model (IM) framework provides such a construction, and here I prove a complete-class theorem stating that, for every nominal confidence region, there exists a valid IM whose plausibility regions are contained by the given confidence region.  This characterization has implications for statistics understanding and communication, and highlights the importance of belief functions and the IM framework.  

\smallskip

\emph{Keywords and phrases:} confidence distribution; inferential model; plausibility function; probability; random set.
\end{abstract}

\section{Introduction}
\label{S:intro}  

Confidence is a fundamental concept in statistics, ``arguably the most substantive ingredient in modern model-based theory'' \citep{fraser2011.rejoinder}, dating back to \citet{neyman1941} and also, indirectly, to \citet{fisher1973}, through its close ties to fiducial inference \citep{zabell1992, seidenfeld1992, efron1998}.  However, like the controversial p-value \citep[e.g.,][]{ionides.response, pvalue.ban, wasserstein.lazar.asa}, interpretation of this fundamental concept is somewhat elusive.  For example, as instructors teaching confidence intervals to students in an introductory statistics course, we are careful to distinguish confidence from probability: ``95\% confidence'' {\em does not} mean that the unknown parameter resides in the stated interval with probability 0.95.  Unfortunately, there apparently is no fully satisfactory explanation of what the ``95\% confidence'' feature of the stated interval actually {\em does} mean.  In practice, a stated confidence interval is informally interpreted as a set of parameter values that, together, is ``sufficiently and justifiably believable'' or, equivalently, as a collection of parameter values that, individually, are ``sufficiently and justifiably plausible.''  Statisticians are reluctant to adopt the use of words like ``believable'' and ``plausible'' because of their seemingly non-scientific connotations, but remaining silent about the interpretation of confidence and, in particular, leaving the door open for incorrect interpretations, is not any more scientific.  Fortunately, belief, like probability, is a well-defined mathematical object, so if a rigorous connection can be made between it and confidence, then we have for our students a clear and honest explanation of what confidence means, consistent with how confidence intervals are used in practice.  Beyond the classroom, there is obvious value---both within the statistics community and in our efforts to communicate with others---in having an agreeable understanding of confidence that is both intuitive and mathematically precise.  A goal of this paper is to formally develop this connection between confidence and belief.  


To set the scene, suppose observable data $X \in \XX$ is modeled by a distribution $\prob_{X|\theta}$ indexed by a parameter $\theta \in \Theta$; here either $X$, $\theta$, or both can be scalars, vectors, or something else.  Let $\phi = \phi(\theta)$ be an interest parameter, taking values in $\Phi = \phi(\Theta)$, and assume existence of a family of set-valued maps $C_\alpha: \XX \to 2^\Phi$ where, for any $\alpha \in (0,1)$, $C_\alpha(X)$ is a $100(1-\alpha)$\% confidence region for $\phi$.  That is, 
\begin{equation}
\label{eq:coverage}
\inf_{\theta \in \Theta} \prob_{X|\theta} \{C_\alpha(X) \ni \phi(\theta)\} \geq 1-\alpha, \quad \forall \; \alpha \in (0,1). 
\end{equation}
Despite the warnings given in textbooks, many might be tempted to convert this ``$X|\theta$''-probability statement into a ``$\theta|(X=x)$''-probability statement by constructing a probability density function for $\phi=\phi(\theta)$, on $\Phi$, that has $C_\alpha(x)$ as its level-$(1-\alpha)$ contour, $\alpha \in (0,1)$.  This defines a so-called {\em confidence distribution} \citep[e.g.,][]{schweder.hjort.book} for $\phi$, but, for reasons described in Section~\ref{SS:not.prob}, it is risky to treat this as a genuine probability distribution.  Basically, certain operations afforded to probabilities are inconsistent with the features of confidence, so the issues with probability go beyond semantics.  Therefore, I conclude that it is inappropriate to convert confidence to probability.  \citet[][p.~74]{fisher1973} seems to agree with this conclusion when he writes: 

\begin{quote}
{\em [Confidence regions] were I think developed and advocated under the impression that in a wider class of cases they could provide information similar to that of the probability statements derived by the fiducial argument.  It is clear, however, that no exact probability statements can be based on them...}
\end{quote}

As an alternative to ordinary probability, here I will focus specifically on uncertainties described by distributions of random sets \citep{molchanov2005, nguyen.book, shafer1979}, a special case of the Dempster--Shafer theory of belief and plausibility functions \citep[e.g.,][]{shafer1976, dempster2008, dempster.copss}.  In particular, I will argue in Section~\ref{SS:then.what} that a belief function is more appropriate than a probability distribution for describing the uncertainty about the unknown parameter encoded in a confidence statement, both intuitively and mathematically \citep[cf.,][]{balch2012}.  In particular, confidence can be propagated through the operations afforded to belief functions, but not through the usual integration with respect to probability measures; see Section~\ref{SS:then.what}.

In addition to re-expressing confidence regions as belief functions, it is of interest to see what insights this connection provides.  Certainly not all belief functions will admit plausibility regions, defined in \eqref{eq:plaus.region} and \eqref{eq:mpl.region}, that meet the coverage probability condition \eqref{eq:coverage}, so important questions include: what additional properties are needed?~and how can a belief function satisfying these properties be constructed?  Answers to both questions can be found in the recent work on {\em inferential models} \citep[IMs,][]{imbook}, and I review the relevant details in Section~\ref{S:source}.  Given that the IM approach provides a framework for constructing valid belief functions with plausibility regions satisfying \eqref{eq:coverage}, a relevant question is if there are confidence regions that cannot be obtained via the IM approach.  Analogous to the classical decision-theoretical results, I prove here, in Section~\ref{S:characterization}, the following complete-class theorem for IMs: given a confidence region $C_\alpha$ for $\phi=\phi(\theta)$, be it Bayes, fiducial, or whatever, as long as it satisfies \eqref{eq:coverage} and a mild compatibility condition, there exists a valid IM on the full parameter space $\Theta$ such that the corresponding (naive) marginal plausibility region for $\phi$ is contained by $C_\alpha$.  That is, at least for the purpose of uncertainty quantification via confidence regions, there is no loss of generality or efficiency in adopting an IM approach.  Therefore, based on this theorem and the fact that IMs provide more than just plausibility regions, I would argue, in the same spirit as \citet{fraser2011}, that {\em confidence regions are quick and dirty IMs}.  

While the focus here is on confidence and its ties to belief functions and IMs, there is a larger context in which the points here are relevant.  A primary source of disagreement between the different schools of thought---Bayes, frequentist, fiducial, etc---is the {\em source} of the probability used to describe uncertainty for the purpose of making inference, and how should those probabilities be interpreted.  But having different theories, and methods built from each that can give different results in applications, hurts our field's reputation in other areas of science \citep{fraser.copss}.  According to the results presented below, if confidence regions are an inferential objective---and they are in virtually every application of statistical inference---then probability is not the appropriate description of that type of uncertainty, so old questions like ``where should probability come from?''~and ``how should these probabilities be interpreted?''~are actually irrelevant.  This realization can potentially put an end to the debates over whose probability is ``right,'' and provide opportunities for better understanding and communication.

\section{What is confidence?}
\label{S:confidence}

\subsection{Not probability}
\label{SS:not.prob}

Introductory statistics textbooks make a distinction between confidence and probability, but what, then, are {\em confidence distributions}?  As the name suggests, a confidence distribution is a probability measure on the parameter space, derived from an $\alpha$-indexed family of confidence regions.  In the simplest context, given a set of upper confidence limits, a confidence distribution can be defined by taking the $100(1-\alpha)$th percentile of the distribution to be the corresponding $100(1-\alpha)$\% upper confidence limit, $\alpha \in (0,1)$.  More precise details about the construction of confidence distributions and their properties can be found in \citet{xie.singh.2012}, \citet{schweder.hjort.2002, schweder.hjort.book}, and \citet{nadarajah.etal.2015}.  Ultimately, this approach boils down to interpreting confidence regions in the way that students in introductory statistics courses are specifically warned not to, i.e., ``the parameter of interest falls in the 95\% confidence region with probability 0.95.''  The aforementioned warning is given to students because the stated inference problem does not come equipped with a probability measure on the parameter space, so probability statements about the parameter have no connection to the ``real world.''  The confidence distribution approach side-steps this issue by treating confidence as a user-defined subjective probability.  While there is nothing principally wrong with a subjective interpretation, there are various reasons to be concerned about calling this a ``probability,'' which I describe below.

First, there are cases where the only confidence regions that have the desired coverage probabilities are unbounded, so stacking these up does not lead to a genuine probability distribution---instead, one gets something with positive mass at ``infinity.''  One example of this phenomenon is the Fieller--Creasy problem \citep{fieller1954, creasy1954} which, in its simplest form, concerns inference about the ratio $\phi = \theta_1/\theta_2$ based on two independent normal observations, $X_1 \sim \nm(\theta_1,1)$ and $X_2 \sim \nm(\theta_2, 1)$.  In particular, for certain $\theta=(\theta_1,\theta_2)$, any $100(1-\alpha)$\% confidence interval for $\phi$ that has coverage probability $1-\alpha$ must be unbounded with positive probability under $\prob_{X|\theta}$.  This implies that certain non-extreme quantiles of the confidence distribution would be $\infty$, i.e., the corresponding confidence density for $\phi$ integrates to something less than 1, hence, it is not a genuine ``distribution.''  This phenomenon arises in the entire class of problems investigated in \citet{gleser.hwang.1987}, which includes regression models with measurement errors and other problems of interest in econometrics \citep{dufour1997}.  

Second, it is well known that confidence distributions for a full parameter cannot generally be marginalized like probabilities, via integration, to obtain a confidence distribution for an interest parameter $\phi=\phi(\theta)$.  A standard example is given in \citet{stein1959}, but here I will reconsider the  Fieller--Creasy problem described above.  It is straightforward to derive a joint confidence distribution for $\theta=(\theta_1,\theta_2)$ based on the two independent observations, $X=(X_1, X_2)$.  In particular, this confidence distribution would be $\nm_2(X, I_2)$, an independent bivariate normal with mean $X$ and unit variances.  If one derives a marginal distribution for $\phi = \theta_1/\theta_2$ from this joint distribution for $\theta$, then one gets the following distribution function for $\phi$, depending on $x=(x_1,x_2)$, 
\[ G_x(\varphi) = \int F(\varphi \, z - x_1) f(z - x_2) \,dz, \quad \varphi \in \RR, \]
where, $F$ and $f=F'$ denote the standard normal distribution and density functions, respectively; see, also, \citet{hinkley1969}.  However, 
\[ C_\alpha(x) = [G_x^{-1}(\tfrac{\alpha}{2}), G_x^{-1}(1-\tfrac{\alpha}{2})] \]
is {\em not} a $100(1-\alpha)$\% confidence interval for $\phi$ since \eqref{eq:coverage} is not satisfied.  The problem is that there are certain (extreme) values of $(\theta_1,\theta_2)$ such that the coverage probability is arbitrarily small; for example, if $\theta_1=1$ and $\theta_2=20$, then the interval above, with $\alpha=0.05$, has coverage probability approximately 0.12, so clearly \eqref{eq:coverage} fails.  

This known marginalization failure, 
along with the reality that users will inevitably be tempted to do this marginalization, has led some researchers to reject the notion of a joint confidence distribution, focusing instead on scalar confidence distributions.  But these marginalization issues can also arise in scalar problems.  For example, there is a reasonable confidence distribution for $\theta$ based on $X \sim \nm(\theta,1)$, but integration does not lead to a genuine confidence distribution for $\phi=|\theta|$.  Indeed, Figure~\ref{fig:abstheta} shows the distribution of the confidence cumulative distribution function, denoted by ${\sf CD}(|\theta|)$, evaluated at the true $|\theta|$, under sampling from $\nm(0.5, 1)$.  Clearly this distribution is not uniform, hence the confidence distribution features are not preserved under marginalization, even in the scalar case. 

\begin{figure}[t]
\begin{center}
\scalebox{0.70}{\includegraphics{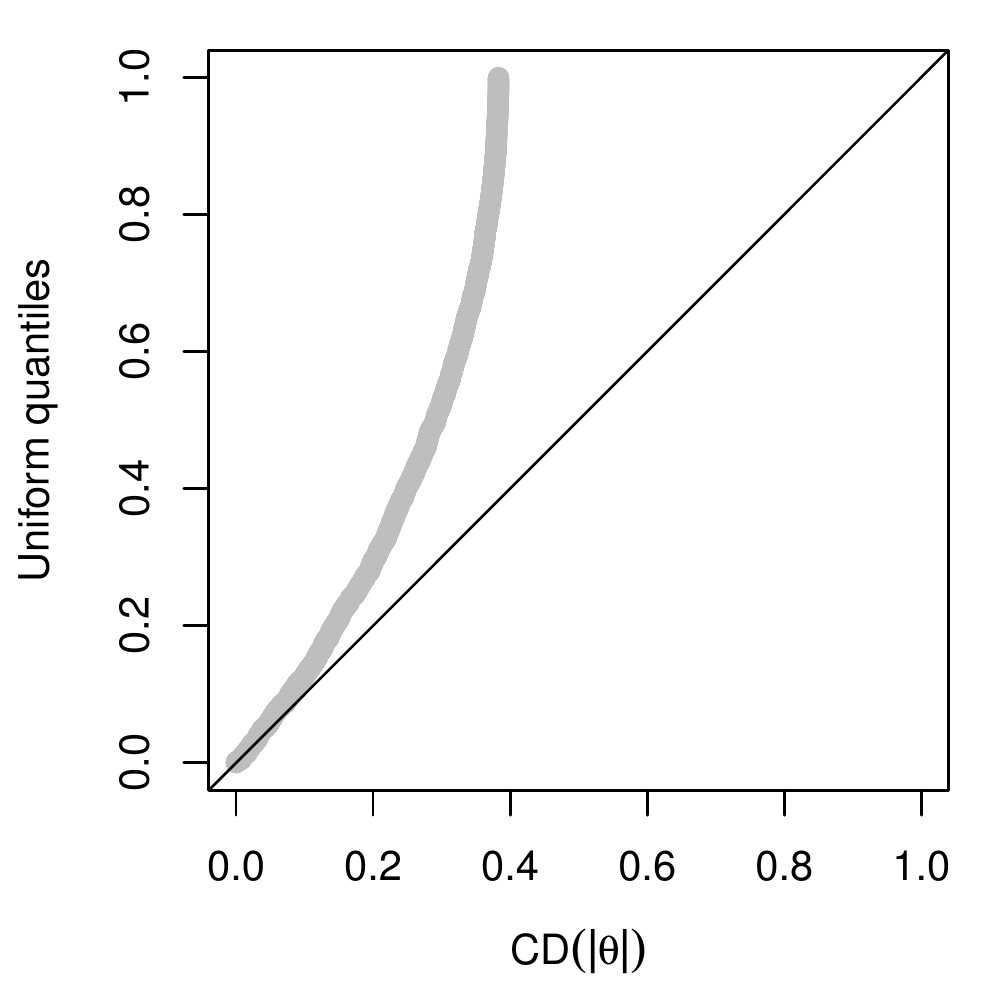}}
\end{center}
\caption{According to Definition~1 in \citet{xie.singh.2012}, ${\sf CD}(|\theta|)$ is a confidence distribution if the gray dots---its empirical distribution based on 5000 Monte Carlo samples under the scenario described in the text---lie on the diagonal line.}
\label{fig:abstheta}
\end{figure}


To summarize, while there are some simple cases where it might seem safe to think of a confidence distribution as a probability, the above points highlight that this probabilistic interpretation might be dubious.  If there are cases where a confidence distribution is, mathematically, not a distribution and, furthermore, even in cases where it is a distribution, it cannot be manipulated like one, then calling it a distribution is both inaccurate and potentially misleading.  And the same issues with marginalization hold for other inferential methods that summarize uncertainty via probability, including Bayes, fiducial, generalized fiducial \citep{hannig.review}, and maybe others; see \citet{balch.martin.ferson.2017}.  Fortunately, there is an alternative to probability that seems particularly suited for describing this type of inferential uncertainty.

\subsection{If not probability, then what?}
\label{SS:then.what}

For the present discussion, consider the case where interest is in the full parameter $\theta$.  Recall the standard connection between confidence regions and significance tests.  In particular, for a test that rejects the null hypothesis $H_0: \theta=\vartheta$ in favor of the alternative $H_1: \theta \neq \vartheta$, at level $\alpha \in (0,1)$, if $C_\alpha(x) \not\ni \vartheta$, the p-value is given by 
\begin{equation}
\label{eq:cr.contour}
p_x(\vartheta) = \sup\{ \alpha \in (0,1): C_\alpha(x) \ni \vartheta \}.
\end{equation}
Allowing $\vartheta$ to vary determines a ``p-value function'' \citep{pvalue.course} that has many other names, including {\em confidence curve} \citep{birnbaum1961, schweder.hjort.2002, schweder.hjort.2013, schweder.hjort.book}, {\em possibility function} \citep{zadeh1978}, {\em preference function} \citep{spjotvoll1983, blaker.spjotvoll.2000}, and {\em significance function} \citep{fraser1990, fraser1991}, among others.  Here, I opt for the {\em plausibility contour} terminology because there is a formal mathematical theory available for describing these objects that will provide some statistical insights.  Specifically, assume that $\{C_\alpha: \alpha \in (0,1)\}$ is nested in the sense that (i)~$C_\alpha(x) \subseteq C_{\alpha'}(x)$ for $\alpha \geq \alpha'$ and for all $x$, and (ii)~there exists $\hat\theta(x)$ in $C_\alpha(x)$ for all $\alpha \in (0,1)$.  For example, Bayesian highest posterior density credible regions have this property, with $\hat\theta(x)$ being the maximum {\em a posteriori}, or MAP, estimator.  Note that $\hat\theta(x)$ need not be unique and, in the case where $C_\alpha(x)$ is ``one-sided,'' like a confidence lower/upper bound, then $\hat\theta(x)$ can be infinite.  In general, nested $C_\alpha$ implies that $p_x$ in \eqref{eq:cr.contour} satisfies 
\[ \sup_\vartheta p_x(\vartheta) = 1. \]
According to \citet{shafer1976, shafer1987}, a plausibility contour satisfying this optimization property defines a {\em plausibility function} via
\begin{equation}
\label{eq:plaus.consonance}
\pl_x(A) = \sup_{\vartheta \in A} p_x(\vartheta), \quad A \subseteq \Theta, 
\end{equation}
which, in turn, corresponds to a consonant {\em belief function} given by 
\[ \bel_x(A) = 1 - \pl_x(A^c) = 1 - \sup_{\vartheta \in A^c} p_x(\vartheta), \quad A \subseteq \Theta. \]
Note that, intuitively, belief in an assertion need not correspond to belief against its complement, and it is easy to see that $\bel_x$ and $\pl_x$ meet this intuition, i.e., $\bel_x(A) \leq \pl_x(A)$ for all $A \subseteq \Theta$.  Given a consonant belief with plausibility function $\pl_x$, it is possible to back out the plausibility contour, i.e., $p_x(\vartheta) = \pl_x(\{\vartheta\})$; henceforth, I will use the latter expression for the contour function instead of carrying a separate notation.

Two immediate insights about the interpretation of $C_\alpha(x)$, for fixed $x$, emerge based on the derived belief and plausibility functions.  First, if a {\em plausibility region} $\Pi_\alpha(x)$ is defined as 
\begin{equation}
\label{eq:plaus.region}
\Pi_\alpha(x) = \bigl\{ \vartheta \in \Theta: \pl_x(\{\vartheta\}) > \alpha \bigr\}, \quad \alpha \in (0,1), \quad x \in \XX, 
\end{equation}
then it follows immediately from the definition of plausibility above that $C_\alpha(x)$ and $\Pi_\alpha(x)$ are the same.  In other words, each $\vartheta$ in $C_\alpha(x)$, for fixed $x$, is individually sufficiently plausible according to $\pl_x$.  Second, again from the definitions above, 
\[ \bel_x\{C_\alpha(x)\} = 1-\alpha, \]
so $C_\alpha(x)$, for fixed $x$, can be viewed as a set of sufficiently believable parameter values.  Note that these two assessments of $C_\alpha(x)$ based on $\bel_x$ and $\pl_x$ agree with those practical ``sufficiently believable/plausible'' interpretations in Section~\ref{S:intro}.  There, an additional adjective---``justifiably''---was used, and this is based on the coverage probability property \eqref{eq:coverage} of the confidence region.  That is, my $1-\alpha$ belief in $C_\alpha(x)$ is justified on the basis that the method used to construct it is reliable in the sense of \eqref{eq:coverage}.  This is in line with the {\em reliabilist} perspective 
in epistemology \citep[e.g.,][]{goldman1979}.   

The belief function described above is determined by the given family of confidence regions $\{C_\alpha: \alpha \in (0,1)\}$.  Since the confidence region is closely tied to the coverage probability condition \eqref{eq:coverage}, it is reasonable to ask how that condition might look in terms of its corresponding belief/plausibility function.  Certainly, there are belief functions that fail to meet this condition, so an answer to this question will provide insight as to which belief functions are consistent with the classical notion of confidence.  In terms of the contour function in \eqref{eq:cr.contour}, it is easy to check that the coverage probability condition \eqref{eq:coverage} is equivalent to 
\begin{equation}
\label{eq:contour.valid}
\sup_{\theta \in \Theta} \prob_{X|\theta}\bigl\{ \pl_X(\{\theta\}) \leq \alpha \bigr\} \leq \alpha, \quad \forall \; \alpha \in (0,1). 
\end{equation}
That is, $\pl_X(\{\theta\})$ should be stochastically no smaller than $\unif(0,1)$ under $\prob_{X|\theta}$.  This is reminiscent of the familiar (right-of-)uniform null distribution of p-values in the hypothesis testing context.  This is as far as the classical analysis goes, drawing a connection between coverage of confidence regions and Type~I error of the corresponding significance test.  Here I will take this analysis further to separate from the classical theory and demonstrate the benefit of this extended point of view.

Consonance of the derived belief/plausibility function implies something stronger than \eqref{eq:contour.valid}.  Indeed, by the definition of $\pl_x$ in \eqref{eq:plaus.consonance} and the property \eqref{eq:contour.valid}, it follows that 
\begin{equation}
\label{eq:plaus.valid}
\sup_{\theta \in A} \prob_{X|\theta} \{\pl_X(A) \leq \alpha\} \leq \alpha, \quad \forall \; A \subseteq \Theta, \quad \forall \; \alpha \in (0,1). 
\end{equation}
This means that $\pl_X(A)$ will tend to be not small, as a function of $X \sim \prob_{X|\theta}$, when the assertion $A$ is true in the sense that $\theta \in A$.  Since the above property holds for all assertions $A \subseteq \Theta$ and $\bel_x(A) = 1-\pl_x(A^c)$, a similar statement can be made in terms of the belief function, i.e., 
\[ \sup_{\theta \not\in A} \prob_{X|\theta} \{\bel_X(A) \geq 1-\alpha\} \leq \alpha. \]
I will say that a belief/plausibility function is {\em valid} if it satisfies \eqref{eq:plaus.valid}.  

That consonance allows for an extension of the basic property \eqref{eq:contour.valid} for singletons to the general validity property has some important consequences in the context of marginalization.  Indeed, if $\phi=\phi(\theta)$ is an interest parameter, taking values in $\Phi = \phi(\Theta)$, then a {\em marginal} plausibility function for $\phi$ can be defined as 
\begin{equation}
\label{eq:mpl}
\mpl_x(B) = \pl_x(\{\vartheta: \phi(\vartheta) \in B\}), \quad B \subseteq \Phi,
\end{equation}
and the corresponding marginal belief function is $\mbel_x(B) = 1 - \mpl_x(B^c)$.  That is, the marginal belief assigned to an assertion $B$ about the interest parameter $\phi$ is just the $\bel_x$-belief assigned to the assertion $\{\vartheta: \phi(\vartheta) \in B\}$ about the full parameter $\theta$.  Note that consonance implies marginalization is achieved by optimization, 
\[ \mpl_x(B) = \sup_{\vartheta: \phi(\vartheta) \in B} \pl_x(\{\vartheta\}), \]
and, in particular, a marginal plausibility contour obtains by taking $B$ a singleton, i.e., 
\[ \mpl_x(\{\varphi\}) = \sup_{\vartheta: \phi(\vartheta) = \varphi} \pl_x(\{\vartheta\}), \quad \varphi \in \Phi. \]
Since the validity property for $\bel_x$ covers all assertions about $\theta$, it immediately follows for $\mbel_x$.  That is, \eqref{eq:plaus.valid} implies 
\[ \sup_{\theta: \phi(\theta) \in B} \prob_{X|\theta} \{ \mpl_X(B) \leq \alpha \} \leq \alpha, \quad \forall\; B \subseteq \Phi, \quad \forall \; \alpha \in (0,1). \]
In particular, this implies that the marginal plausibility region 
\begin{equation}
\label{eq:mpl.region}
\Pi_\alpha^\phi(x) = \{\varphi \in \Phi: \mpl_x(\{\varphi\}) > \alpha\} 
\end{equation}
is a nominal $100(1-\alpha)$\% confidence region for $\phi$.  This analysis and conclusions hold for general belief/plausibility functions on $\Theta$ satisfying the validity property \eqref{eq:plaus.valid}; see Section~\ref{S:source}.  However, in the present case where $\bel_x$ is derived from a given family of confidence regions $C_\alpha$, it can be verified that the region $\Pi_\alpha^\phi(x)$ is exactly the ``naive'' marginal confidence region obtained by projecting $C_\alpha(x)$ to the $\phi$ margin, i.e., 
\[ \{\varphi: \phi(\vartheta) = \varphi \text{ for some } \vartheta \in C_\alpha(x)\}. \]
In this case, that the coverage property is preserved is no surprise, but it reveals that the belief/plausibility function representation and operations are natural for expressing and manipulating confidence \citep{balch2012}.  For comparison, recall from Section~\ref{SS:not.prob} that confidence properties may not be preserved under probability-style marginalization via integration.  Therefore, as mentioned in Section~\ref{S:intro}, belief and plausibility is a more appropriate framework in which to express confidence than ordinary probability.

\section{Constructing valid belief functions}
\label{S:source}


The previous section demonstrated that confidence is most appropriately expressed in terms of valid belief functions, and manipulated accordingly.  There, the belief function was derived from a given confidence region and its validity was a consequence of the assumed coverage probability feature \eqref{eq:coverage}.  Here, to build on those insights, we consider the question of how to directly construct a valid belief function without a confidence region to start with.  To my knowledge, the only available construction is via the so-called {\em inferential model} (IM) framework.  \citet{imbasics} present the following three-step IM construction, whose key feature is the introduction of a suitably calibrated random set on a specified auxiliary variable space.  

\begin{astep}
Associate data $X$ and parameter $\theta$ with an auxiliary variable $U$, taking values in $\UU$, with \emph{known} distribution $\prob_U$.  In particular, let 
\begin{equation}
\label{eq:basic.assoc}
X = a(\theta, U), \quad U \sim \prob_U. 
\end{equation}
This is just a mathematical description of an algorithm for simulating from $\prob_{X|\theta}$, and it is not necessary to assume that data is actually generated according to this process.  The inferential role played by the association is to shift primary focus from $\theta$ to $U$.  To see this, define the sets 
\begin{equation}
\label{eq:basic.focal}
\Theta_x(u) = \{\theta: x = a(\theta,u)\}, \quad u \in \UU, \quad x \in \XX. 
\end{equation} 
Given data $X=x$, if an {\em oracle} tells me a value $u^\star$ that satisfies $x=a(\theta,u^\star)$, then my inference is $\theta \in \Theta_x(u^\star)$, the ``strongest possible'' conclusion.  
\end{astep}

\begin{pstep}
Predict the unobserved value of $U$ in \eqref{eq:basic.assoc} with a random set $\S$.  This step is motivated by the shift of focus from $\theta$ to $U$, and is the feature that distinguishes the IM framework from fiducial.  The distribution $\prob_\S$ of $\S$ is to be chosen by the data analyst, subject to certain conditions; see Theorem~\ref{thm:valid} below. 
\end{pstep}

\begin{cstep}
Combine the association map in \eqref{eq:basic.focal} and the observed data $X=x$ with the random set $\S$ to obtain a new random set
\begin{equation}
\label{eq:post.focal}
\Theta_x(\S) = \bigcup_{u \in \S} \Theta_x(u). 
\end{equation}
Note that $\Theta_x(\S)$ contains the true $\theta$ if (and often only if) $\S$ likewise contains the unobserved value $u^\star$ of $U$.  The IM output is the distribution of $\Theta_x(\S)$, which, if $\Theta_x(\S)$ is non-empty with $\prob_\S$-probability~1 for each $x$, can be summarized by a {\em belief} and {\em plausibility function} pair, given by 
\begin{align*}
\bel_x(A) & = \prob_\S\{\Theta_x(\S) \subseteq A\} \\
\pl_x(A) & = \prob_\S\{\Theta_x(\S) \cap A \neq \varnothing\}, \quad A \subseteq \Theta.
\end{align*}
\end{cstep}



The pair $(\bel_x, \pl_x)$ measures the user's degree of belief about $\theta$, given data $x$ and the specified sampling model.  That is, large $\bel_x(A)$ and small $\pl_x(A)$ indicate strong and weak support in $x$ for the truthfulness of ``$\theta \in A$,'' respectively; intermediate cases correspond to certain degrees of ``don't know'' \citep{dempster2008, dempster.copss}.  Ideally, this output should represent something more than just the user's degrees of belief.  Indeed, \citet{reid.cox.2014} write that ``it is unacceptable if a procedure\ldots of representing uncertain knowledge would, if used repeatedly, give systematically misleading conclusions.''  To protect against this, the IM approach insists that its belief and plausibility functions are {\em valid} in the sense of \eqref{eq:plaus.valid}, which effectively calibrates the plausibility function values, leading to a connection with classical notions of confidence as described in Section~\ref{S:confidence}.  

How can one check that the validity condition \eqref{eq:plaus.valid} holds for the IM constructed above?  The following result, given in \citet{imbasics}, shows that validity holds for a wide class of predictive random sets $\S$ introduced in the P-step.  Let $(\UU,\U)$ be the measurable space on which $\prob_U$ is defined, and assume that $\U$ contains all closed subsets of $\UU$.  

\begin{thm}
\label{thm:valid}
Suppose that the predictive random set $\S$, supported on $\SS$, with distribution $\prob_\S$, satisfies the following:
\begin{itemize}
\item[\rm P1.] The support $\SS \subset 2^\UU$ contains $\varnothing$ and $\UU$, and: \\
{\rm (a)} is closed, i.e., each $S \in \SS$ is closed and, hence, in $\U$, and \\
{\rm (b)} is nested, i.e., for any $S,S' \in \SS$, either $S \subseteq S'$ or $S' \subseteq S$.
\item[\rm P2.] The distribution $\prob_\S$ satisfies $\prob_\S\{\S \subseteq K\} = \sup_{S \in \SS: S \subseteq K} \prob_U(S)$, for each $K \subseteq \UU$.  
\end{itemize}
In addition, if $\Theta_x(\S)$ is non-empty with $\prob_\S$-probability~1 for all $x$, then the IM is valid in the sense that its plausibility function satisfies \eqref{eq:plaus.valid}.  
\end{thm}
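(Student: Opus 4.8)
The plan is to reduce the validity property \eqref{eq:plaus.valid} to a single distribution-free property of the predictive random set $\S$ on the auxiliary space $\UU$, and then to extract that property from P1 and P2. The bridge is a \emph{capturing} argument. Fix an assertion $A \subseteq \Theta$ and a true parameter value $\theta \in A$, and generate data as $X = a(\theta,U)$ with $U \sim \prob_U$, exactly as in \eqref{eq:basic.assoc}. By the definition of the focal sets in \eqref{eq:basic.focal} we have $\theta \in \Theta_X(U)$, so on the $\prob_\S$-event $\{\S \ni U\}$ (with $X$ and $U$ held fixed) the monotonicity in \eqref{eq:post.focal} gives $\Theta_X(U) \subseteq \Theta_X(\S)$ and hence $\theta \in \Theta_X(\S) \cap A$. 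Thus $\{\S \ni U\} \subseteq \{\Theta_X(\S) \cap A \neq \varnothing\}$, and taking $\prob_\S$-probabilities yields the pointwise lower bound $\pl_X(A) \geq \prob_\S\{\S \ni U\} = 1 - Q(U)$, where I write $Q(u) = \prob_\S\{\S \not\ni u\}$. Consequently $\{\pl_X(A) \leq \alpha\} \subseteq \{Q(U) \geq 1-\alpha\}$, so that $\prob_{X|\theta}\{\pl_X(A) \leq \alpha\} \leq \prob_U\{Q(U) \geq 1-\alpha\}$ for every $\theta \in A$; crucially, the right-hand side no longer involves $A$ or $\theta$.

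It therefore remains to prove the auxiliary-space inequality $\prob_U\{Q(U) \geq 1-\alpha\} \leq \alpha$ for all $\alpha \in (0,1)$, and this is where P1 and P2 enter. Applying P2 with $K = \UU \setminus \{u\}$ computes the contour explicitly, $Q(u) = \prob_\S\{\S \subseteq \UU \setminus \{u\}\} = \sup_{S \in \SS:\, u \notin S} \prob_U(S)$, so the claim becomes a statement purely about the base measure $\prob_U$ and the nested family $\SS$. Fixing $\beta \in (0,1)$, I would identify the superlevel set $\{u : Q(u) > \beta\} = \UU \setminus S_\beta$, where $S_\beta = \bigcap\{S \in \SS : \prob_U(S) > \beta\}$; the nestedness in P1(b) makes $\{S \in \SS : u \notin S\}$ an initial segment of the chain $\SS$ and turns $S_\beta$ into a decreasing intersection along that chain. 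Continuity of $\prob_U$ from above---legitimate because P1(a) keeps every $S \in \SS$ closed and hence in $\U$---then gives $\prob_U(S_\beta) \geq \beta$, whence $\prob_U\{Q(U) > \beta\} \leq 1 - \beta$. Passing to the monotone limit $\{Q(U) \geq 1-\alpha\} = \bigcap_n \{Q(U) > 1-\alpha - 1/n\}$ converts this into $\prob_U\{Q(U) \geq 1-\alpha\} \leq \alpha$, as needed. Intuitively, P2 makes $Q(\cdot)$ behave like the ``distribution function'' of the nested family relative to $\prob_U$, so $Q(U)$ is a probability-integral-transform that is stochastically no smaller than $\unif(0,1)$.

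Chaining the two estimates gives $\sup_{\theta \in A} \prob_{X|\theta}\{\pl_X(A) \leq \alpha\} \leq \prob_U\{Q(U) \geq 1-\alpha\} \leq \alpha$, uniformly over all $A \subseteq \Theta$ and $\alpha \in (0,1)$, which is exactly \eqref{eq:plaus.valid}. The assumption that $\Theta_x(\S)$ is non-empty with $\prob_\S$-probability one is used only to guarantee that $(\bel_x, \pl_x)$ is a genuine normalized belief/plausibility pair, so that $\pl_X(A)$ is a bona fide probability; the conditions $\varnothing, \UU \in \SS$ likewise ensure that the $\prob_\S$ posited in P2 is a well-defined probability with the correct endpoints. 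I expect the main obstacle to be the second paragraph rather than the capturing argument: the manipulation of the supremum in $Q$, the identification of $S_\beta$ as a genuine measurable set, and the continuity-from-above step along a possibly uncountable nested chain are precisely the places where the closedness and nestedness in P1 are indispensable, and these measure-theoretic details---not any additional probabilistic idea---are what require care.
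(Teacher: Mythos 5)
The paper does not reproduce a proof of Theorem~\ref{thm:valid} (it is imported from \citet{imbasics}), but your argument is essentially the one given there: the capturing step $\{\S \ni U\} \subseteq \{\Theta_X(\S) \cap A \neq \varnothing\}$ for $\theta \in A$ reduces validity to the single auxiliary-space inequality $\prob_U\{Q(U) \geq 1-\alpha\} \leq \alpha$ with $Q(u) = \prob_\S\{\S \not\ni u\}$, which is then extracted from P1--P2 exactly as you do. The proof is correct; the one step to tighten is the continuity-from-above over a possibly uncountable chain, which you rightly flag: choose a countable subchain $S_n$ with $\prob_U(S_n)$ decreasing to $\inf\{\prob_U(S): S \in \SS,\ \prob_U(S) > \beta\}$, and observe that every remaining member of $\{S \in \SS: \prob_U(S) > \beta\}$ differs from $T = \bigcap_n S_n$ by a relatively open $\prob_U$-null subset of $T$, so the union of these differences is still null whenever $\UU$ is second countable (as in every application considered here). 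One small slip in your closing intuition: $Q(U)$ should be stochastically \emph{no larger} than $\unif(0,1)$; it is $1 - Q(U)$, i.e., the plausibility contour, that is stochastically no smaller.
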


Constructing a random set $\S$ that satisfies P1--P2 is relatively easy; see Corollary~1 in \citet{imbasics}.  Note that the nested support property P1(b) implies that the corresponding belief function is consonant, which was important to the development and understanding in Section~\ref{SS:then.what}.  The non-emptiness condition, namely, $\Theta_x(\S) \neq \varnothing$ with $\prob_\S$-probability~1, holds trivially in many examples but not universally; often some refinements are needed, e.g., auxiliary variable dimension reduction techniques \citep{imcond, immarg} and/or random set stretching \citep{leafliu2012}.  The necessary details for the present setting will be given in Section~\ref{S:characterization}.


\section{An IM characterization of confidence}
\label{S:characterization}

\subsection{IMs with a family of random sets}

The developments in the IM literature have focused primarily on the case of a single predictive random set $\S \sim \prob_\S$, but this is not the only possibility.  Here it will be advantageous to consider a family of predictive random sets indexed by the parameter space $\Theta$.  That is, consider a fixed auxiliary variable space $\UU$ and a family of supports $\{\SS_\vartheta: \vartheta \in \Theta\}$, each consisting of subsets of $\UU$ satisfying Condition~P1 of Theorem~\ref{thm:valid} for each $\vartheta$.  Then, on each support $\SS_\vartheta$, define a distribution $\prob_{\S|\vartheta}$ to satisfy Condition~P2.   

Given an association $X=a(\theta, U)$, for $U \sim \prob_U$, where $\prob_U$ is supported on $\UU$, each of the predictive random sets $\S \sim \prob_{\S|\vartheta}$ described above, for $\vartheta \in \Theta$, defines an IM, with corresponding plausibility function 
\[ \pl_x(A \mid \vartheta) = \prob_{\S|\vartheta}\{\Theta_x(\S) \cap A \neq \varnothing\}, \quad A \subseteq \Theta, \quad \vartheta \in \Theta. \]
Here, I propose to {\em fuse} these $\vartheta$-specific plausibility functions together into a single plausibility function via the following formula:
\begin{equation}
\label{eq:fuse}
\pl_x(A) = \sup_{\vartheta \in A} \pl_x(\{\vartheta\} \mid \vartheta), \quad A \subseteq \Theta. 
\end{equation}
This kind of fused plausibility function has been used informally in applications of the so-called ``local conditional IMs'' \citep[e.g.,][]{imcond, imvch, imunif}, designed to achieve a level of dimension reduction beyond that which is available via sufficiency and/or conditioning arguments, leading to exact and efficient solutions in some challenging non-regular problems.  

According to \citet{shafer1987}, the quantity defined in \eqref{eq:fuse} is a genuine plausibility function, corresponding to a consonant belief function, if
\begin{equation}
\label{eq:max}
\sup_\vartheta \pl_x(\{\vartheta\} \mid \vartheta) = 1. 
\end{equation}
Moreover, validity of the IM corresponding to this fused plausibility function follows immediately from that for the individual $\vartheta$-specific IMs implied by Theorem~\ref{thm:valid}.  

\begin{thm}
\label{thm:valid.fuse}
Suppose that \eqref{eq:max} holds.  If the $\vartheta$-specific IM is valid for each $\vartheta \in \Theta$, then the fused IM with plausibility function \eqref{eq:fuse} is also valid.  
\end{thm}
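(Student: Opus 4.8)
The plan is to reduce the general validity requirement \eqref{eq:plaus.valid} for the fused plausibility function to its singleton (contour) version \eqref{eq:contour.valid}, exactly as consonance permitted in Section~\ref{SS:then.what}, and then to verify the contour version by a direct ``diagonal'' computation. Since \eqref{eq:max} is assumed, the fused quantity in \eqref{eq:fuse} is, by \citet{shafer1987}, a genuine plausibility function corresponding to a consonant belief function; in particular it has the form $\pl_x(A) = \sup_{\vartheta \in A}\pl_x(\{\vartheta\})$, so it suffices to control its contour $\vartheta \mapsto \pl_x(\{\vartheta\})$.

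First I would compute that contour. Setting $A = \{\theta\}$ in \eqref{eq:fuse}, the supremum ranges over the single point $\vartheta = \theta$, so
\begin{equation*}
\pl_x(\{\theta\}) = \pl_x(\{\theta\} \mid \theta), \quad \theta \in \Theta.
\end{equation*}
That is, the fused contour at $\theta$ is nothing but the diagonal value of the $\theta$-specific contour. Next, validity of the $\theta$-specific IM---its instance of \eqref{eq:plaus.valid} taken at the singleton assertion $A=\{\theta\}$---gives $\prob_{X|\theta}\{\pl_X(\{\theta\}\mid\theta) \leq \alpha\} \leq \alpha$ for every $\theta$ and every $\alpha$. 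Substituting the contour identity and taking the supremum over $\theta$ yields precisely \eqref{eq:contour.valid} for the fused plausibility function.

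Finally, I would invoke consonance to upgrade this to the full property. For any assertion $A$ and any $\theta \in A$, consonance gives $\pl_x(A) \geq \pl_x(\{\theta\})$, so the event $\{\pl_X(A) \leq \alpha\}$ is contained in $\{\pl_X(\{\theta\}) \leq \alpha\}$; applying the contour bound just established and taking the supremum over $\theta \in A$ recovers \eqref{eq:plaus.valid}. This is the same singleton-to-general passage already used in Section~\ref{SS:then.what}.

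The argument is short, and I expect no serious obstacle: the only thing doing real work is the observation that fusing by \eqref{eq:fuse} collapses the contour of the combined function onto the diagonal $\vartheta$-specific contours, which is exactly where each component IM's validity is anchored. Two points warrant care: first, confirming that \eqref{eq:max} genuinely secures consonance, so the combined object is a bona fide plausibility function (and the reduction to the contour is legitimate) rather than merely a calibrated contour; second, noting that one needs only the diagonal singleton instance of each $\vartheta$-specific IM's validity, not its full strength---so the hypothesis is used economically and no joint behavior across different $\vartheta$ is required.
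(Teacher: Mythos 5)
Your proposal is correct and is essentially the paper's own argument: the key step in both is that, by the definition of the fused function \eqref{eq:fuse}, $\pl_x(A) \geq \pl_x(\{\theta\}\mid\theta)$ for every $\theta \in A$, so the event $\{\pl_X(A) \leq \alpha\}$ is contained in $\{\pl_X(\{\theta\}\mid\theta) \leq \alpha\}$, and the diagonal singleton instance of each $\vartheta$-specific IM's validity finishes the job. The only cosmetic difference is that you route the monotonicity through consonance and \eqref{eq:max}, whereas the paper reads it directly off the supremum in \eqref{eq:fuse}, for which \eqref{eq:max} is not actually needed.
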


\begin{proof}
The essential observation is that, for the plausibility function $\pl_x(\cdot)$ defined in \eqref{eq:fuse}, $\pl_x(A) \leq \alpha$ implies $\pl_x(\{\theta\} \mid \theta) \leq \alpha$ for all $\theta \in A$.  Therefore, 
\[ \sup_{\theta \in A} \prob_{X|\theta}\{\pl_X(A) \leq \alpha\} \leq \sup_{\theta \in A} \prob_{X|\theta}\{ \pl_X(\{\theta\} \mid \theta) \leq \alpha\}. \]
Validity of the IM with plausibility function $\pl_x(\cdot \mid \theta)$ for each $\theta \in \Theta$ implies that the right-hand side above is no more than $\alpha$.   Since this holds for all $A \subseteq \Theta$ and all $\alpha \in (0,1)$, the claimed validity of the fused IM follows.  
\end{proof}


\subsection{A complete-class theorem}
\label{SS:class}

The goal of this section is to establish the advertised complete-class result, i.e., given a suitable confidence region, there exists a valid (fused) IM such that the corresponding plausibility region matches the given confidence region.  Recall the sampling model $X \sim \prob_{X|\theta}$ with unknown parameter $\theta \in \Theta$.  Suppose that the parameter of interest is $\phi = \phi(\theta)$, possibly vector-valued, and let $\Phi = \phi(\Theta)$ be its range.  Let $C_\alpha: \XX \to 2^\Phi$ be the rule that determines, for any specified level $\alpha \in (0,1)$, based on data $X$, a $100(1-\alpha)$\% confidence region $C_\alpha(X)$ for $\phi=\phi(\theta)$.  Its defining property is that $C_\alpha(X)$, a random set as a function of $X \sim \prob_{X|\theta}$, satisfies the coverage probability condition \eqref{eq:coverage}.
I will also assume that the collection $\{C_\alpha: \alpha \in (0,1)\}$ is nested in the sense that $C_\alpha(x) \subseteq C_{\alpha'}(x)$ for $\alpha \geq \alpha'$ and for all $x \in \XX$, and the following limiting properties hold:
\[ \bigcup_\alpha C_\alpha(x) = \Phi \quad \text{and} \quad \bigcap_\alpha C_\alpha(x) \neq \varnothing. \]
The latter non-emptiness condition amounts to there existing a point, say, $\hat\phi(x)$ that belongs to every confidence region $C_\alpha(x)$, which is a standard relationship between point estimators and confidence regions.  For example, the Bayesian MAP estimator belongs to all of the highest posterior density credible regions.  

Next, take an association, $X=a(\theta, U)$ where $U \sim \prob_U$, like in \eqref{eq:basic.assoc}, consistent with the posited model $\prob_{X|\theta}$.  Technically, $\prob_U$ is defined on a measurable space $(\UU, \U)$, and I will assume that the $\sigma$-algebra $\U$ is sufficiently rich that it contains all the closed subsets of $\UU$ relative to the topology ${\cal T}$ on $\UU$.  Given this association, define the collection of subsets $\Theta_x(u) = \{\vartheta: x=a(\vartheta, u)\}$ as in \eqref{eq:basic.focal}, and the new collection 
\begin{equation}
\label{eq:S.alpha}
S_\alpha(\vartheta) = \text{clos}(\{u: C_\alpha(a(\vartheta, u)) \ni \phi(\vartheta)\}), \quad (\alpha, \vartheta) \in (0,1) \times \Theta, 
\end{equation}
where $\text{clos}(B)$ denotes the closure of $B \subseteq \UU$ with respect to the topology ${\cal T}$.  
In addition to the confidence-regions-are-nested assumption, the complete-class result requires the following {\em compatibility condition} on the confidence region and the association:
\begin{equation}
\label{eq:compatible}
\bigcup_{u \in S_\alpha(\vartheta)} \Theta_x(u) \neq \varnothing \quad \forall \; (x, \vartheta, \alpha) \in \XX \times \Theta \times (0,1). 
\end{equation}
In most ``regular'' problems, it is possible to arrange the association such that $\Theta_x(u)$ is non-empty for all $u$, hence compatibility \eqref{eq:compatible} is trivial, but I will postpone detailed discussion of this condition to Section~\ref{SS:conditions}.  

Theorem~\ref{thm:complete} characterizes confidence regions as IM-based plausibility regions, forging a rigorous connection between the two; a similar result connecting p-values to IMs is given in \citet{impval}.  Some remarks on the conditions and implications of the theorem are given in Section~\ref{S:remarks}.  

\begin{thm}
\label{thm:complete}
Let $C_\alpha$ be a family of nested confidence regions for $\phi=\phi(\theta)$ that satisfies \eqref{eq:coverage}.  Suppose that the sampling model $\prob_{X|\theta}$ admits an association \eqref{eq:basic.assoc} that is compatible with the family of confidence regions in the sense that \eqref{eq:compatible} holds.  Then there exists a valid (fused) IM for $\theta$, with marginal plausibility regions $\Pi_\alpha^\phi$ as in \eqref{eq:mpl.region}, such that 
\begin{equation}
\label{eq:subset}
\Pi_\alpha^\phi(x) \subseteq C_\alpha(x) \quad \forall \; (x,\alpha) \in \XX \times (0,1). 
\end{equation}
Equality holds in \eqref{eq:subset} for a particular $\alpha$ if the coverage probability function 
\begin{equation}
\label{eq:cp.function}
\phi \mapsto \inf_{\theta: \phi(\theta) = \phi} \prob_{X|\theta}\{C_\alpha(X) \ni \phi\} 
\end{equation}
is constant equal to $1-\alpha$.  
\end{thm}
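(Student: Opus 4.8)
The plan is to construct the valid fused IM explicitly from the sets $S_\alpha(\vartheta)$ in \eqref{eq:S.alpha}, and then to compare its marginal plausibility contour against the confidence curve $p_x(\varphi) = \sup\{\alpha : C_\alpha(x) \ni \varphi\}$ attached to $C_\alpha$. The containment \eqref{eq:subset} should come out as an inequality between these two contours, and the equality clause should come out of tightening that inequality.

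First I would fix $\vartheta$ and assemble the family $\SS_\vartheta = \{\varnothing, \UU\} \cup \{S_\alpha(\vartheta) : \alpha \in (0,1)\}$. Because $C_\alpha(x) \subseteq C_{\alpha'}(x)$ for $\alpha \geq \alpha'$, the defining sets $\{u : C_\alpha(a(\vartheta,u)) \ni \phi(\vartheta)\}$ shrink as $\alpha$ grows, so after taking closures the $S_\alpha(\vartheta)$ are closed and nested, which is exactly P1; I then equip $\SS_\vartheta$ with the distribution $\prob_{\S|\vartheta}$ prescribed by P2, possible by Corollary~1 of \citet{imbasics}. The compatibility condition \eqref{eq:compatible} is precisely the statement that $\Theta_x(\S) = \bigcup_{u \in \S}\Theta_x(u)$ is non-empty for every realization $\S = S_\alpha(\vartheta)$, so the non-emptiness hypothesis of Theorem~\ref{thm:valid} holds and each $\vartheta$-specific IM is valid. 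To pass to the fused IM via Theorem~\ref{thm:valid.fuse} I must check \eqref{eq:max}: choosing a value in the $\phi$-fiber of a central point $\hat\phi(x) \in \bigcap_\alpha C_\alpha(x)$ that is compatible with $x$ makes $\phi(\vartheta) \in C_\alpha(x)$ for all $\alpha$, which forces the fiber $F_x(\vartheta) = \{u : a(\vartheta,u) = x\}$ into every $S_\alpha(\vartheta)$ and drives $\pl_x(\{\vartheta\}\mid\vartheta)$ to $1$.

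The heart of the argument is a single observation about the fibers: if $a(\vartheta,u) = x$, then the event $C_\alpha(a(\vartheta,u)) \ni \phi(\vartheta)$ collapses to $C_\alpha(x) \ni \phi(\vartheta)$, which no longer depends on $u$. Hence $F_x(\vartheta) \subseteq S_\alpha(\vartheta)$ exactly when $\phi(\vartheta) \in C_\alpha(x)$, and is disjoint from it (up to the closure) otherwise. Feeding this into the P2 expression
\[ \pl_x(\{\vartheta\}\mid\vartheta) = \prob_{\S|\vartheta}\{\vartheta \in \Theta_x(\S)\} = 1 - \sup_{S \in \SS_\vartheta :\, S \subseteq F_x(\vartheta)^c} \prob_U(S), \]
together with the coverage bound $\prob_U(S_\alpha(\vartheta)) \geq \prob_{X|\vartheta}\{C_\alpha(X) \ni \phi(\vartheta)\} \geq 1-\alpha$, yields $\pl_x(\{\vartheta\}\mid\vartheta) \leq p_x(\phi(\vartheta))$. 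Since $C_\alpha(x) \ni \phi(\vartheta)$ depends on $\vartheta$ only through $\phi(\vartheta)$, the supremum over the fiber $\phi(\vartheta) = \varphi$ in \eqref{eq:mpl} gives $\mpl_x(\{\varphi\}) \leq p_x(\varphi)$. Finally $\{\varphi : p_x(\varphi) > \alpha\} \subseteq C_\alpha(x)$ by nestedness, and $\Pi_\alpha^\phi(x) = \{\varphi : \mpl_x(\{\varphi\}) > \alpha\} \subseteq \{\varphi : p_x(\varphi) > \alpha\}$, which chains to \eqref{eq:subset}.

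For the equality clause I would revisit the two inequalities above and show that constancy of \eqref{eq:cp.function} at $1-\alpha$ makes both tight at that level: exact coverage gives $\prob_U(S_\alpha(\vartheta)) = 1-\alpha$, so the P2 computation returns $\pl_x(\{\vartheta\}\mid\vartheta) = p_x(\phi(\vartheta))$ rather than a bound, and the fiber-supremum then reproduces $C_\alpha(x)$. I expect the main obstacle to be controlling the closure in \eqref{eq:S.alpha}: it is needed so that P1(a) holds and Theorem~\ref{thm:valid} applies, but in principle it could enlarge $S_\alpha(\vartheta)$ enough that $F_x(\vartheta)$ meets it even when $\phi(\vartheta) \notin C_\alpha(x)$, inflating the plausibility and jeopardizing the containment. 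Establishing that the added boundary is $\prob_U$-negligible (or using right-continuity of $\alpha \mapsto \prob_U(S_\alpha(\vartheta))$ to take the relevant one-sided limit), and simultaneously justifying the interchange of the fiber-infimum in \eqref{eq:cp.function} with the fiber-supremum in $\mpl_x$ for the equality case, is where the real work lies.
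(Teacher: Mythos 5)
Your proposal is correct and follows essentially the same route as the paper: the same supports $\SS_\vartheta=\{S_\alpha(\vartheta)\}$ equipped with the P2 distribution, validity via Theorems~\ref{thm:valid}--\ref{thm:valid.fuse}, and the key bound $\pl_x(\{\vartheta\}\mid\vartheta)\leq p_x(\phi(\vartheta))$, where your confidence-curve value $p_x(\phi(\vartheta))$ is exactly the paper's index $\alpha(x,\vartheta)$ in \eqref{eq:alpha}. Your explicit verification of \eqref{eq:max} via $\hat\phi(x)\in\bigcap_\alpha C_\alpha(x)$, and your flagging of the closure/boundary issue and the fiber infimum-versus-supremum subtlety in the equality clause, are points the paper's proof passes over silently, but they do not change the argument.
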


\begin{proof}
For the A-step of the IM construction, take any association $X = a(\theta, U)$, $U \sim \prob_U$, consistent with the above sampling distribution $\prob_{X|\theta}$ and compatible with the given confidence region in the sense that \eqref{eq:compatible} holds.  For the P-step, take a family of predictive random sets $\S \sim \prob_{\S|\vartheta}$, indexed by $\vartheta \in \Theta$, with support 
\[ \SS_\vartheta = \{S_\alpha(\vartheta): \alpha \in (0,1)\}, \]
closed and nested by the definition of $S_\alpha(\vartheta)$ in \eqref{eq:S.alpha}, and distribution $\prob_{\S|\vartheta}$ satisfying 
\begin{equation}
\label{eq:prob}
\prob_{\S|\vartheta}(\S \subseteq K) = \sup_{\alpha: S_\alpha(\vartheta) \subseteq K} \prob_U\{S_\alpha(\vartheta)\}. 
\end{equation}
That $S_\alpha(\vartheta)$ is $\prob_U$-measurable follows from the fact that $S_\alpha(\vartheta)$ is closed and the $\sigma$-algebra $\U$ contains all closed subsets of $\UU$.  Note, also, that $\Theta_x(\S)$ in \eqref{eq:post.focal} is non-empty with $\prob_{\S|\vartheta}$-probability~1 according to \eqref{eq:compatible}.  For the special singleton assertion $\{\vartheta\}$ about $\theta$, the C-step returns the plausibility function 
\[ \pl_x(\{\vartheta\} \mid \vartheta) = \prob_{\S|\vartheta}\{\Theta_x(\S) \ni \vartheta\} = \prob_{\S|\vartheta}\{\S \cap \UU_x(\vartheta) \neq \varnothing\}, \]
where $\UU_x(\vartheta)$ is defined as 
\[ \UU_x(\vartheta) = \{u: x=a(\vartheta, u)\}. \]
This determines the fused plausibility function \eqref{eq:fuse} and the corresponding fused IM is valid based on Theorem~\ref{thm:valid.fuse}.  To establish the desired connection between the plausibility region of this valid IM and the given confidence region, define the index 
\begin{equation}
\label{eq:alpha}
\alpha(x,\vartheta) = \sup\{\alpha \in (0,1): S_\alpha(\vartheta) \cap \UU_x(\vartheta) \neq \varnothing\}, 
\end{equation}
so that 
\[ \pl_x(\{\vartheta\} \mid \vartheta) = 1-\prob_{\S|\vartheta}\{\S \subseteq S_{\alpha(x,\vartheta)}(\vartheta)\} \leq \alpha(x,\vartheta), \]
where the inequality is due to \eqref{eq:prob} and \eqref{eq:coverage}.  The marginal plausibility function $\mpl_x$ for $\phi=\phi(\theta)$ is given in \eqref{eq:mpl}, and the corresponding marginal plausibility region $\Pi_\alpha^\phi(x)$ is in \eqref{eq:mpl.region}.  This region satisfies:
\begin{align*}
\varphi \in \Pi_\alpha^\phi(x) \iff &\sup_{\vartheta: \phi(\vartheta) = \varphi} \pl_x(\{\vartheta\} \mid \vartheta) > \alpha \\
\iff &\pl_x(\{\vartheta\} \mid \vartheta) > \alpha &\text{for some $\vartheta$ with $\phi(\vartheta) = \varphi$} \\
\implies & \alpha(x,\vartheta) > \alpha & \text{for some $\vartheta$ with $\phi(\vartheta) = \varphi$} \\
\iff & S_\alpha(\vartheta) \cap \UU_x(\vartheta) \neq \varnothing & \text{for some $\vartheta$ with $\phi(\vartheta) = \varphi$} \\
\iff & \varphi \in C_\alpha(x). 
\end{align*}
Therefore, the valid IM constructed above has plausibility regions for $\phi=\phi(\theta)$ that are contained in the given confidence regions, completing the proof of the first claim.  

For the claim about when equality holds in \eqref{eq:subset}, note that the one-sided implication ``$\Longrightarrow$'' in the above display becomes two-sided if $\pl_x(\{\vartheta\} \mid \vartheta) = \alpha(x,\vartheta)$, which is the case if the coverage probability function \eqref{eq:cp.function} is constant equal to $1-\alpha$ as in the statement of the theorem.  
\end{proof} 



\section{Remarks}
\label{S:remarks}

\subsection{On the conditions of Theorem~\ref{thm:complete}}
\label{SS:conditions}

The confidence regions considered under the formulation of Theorem~\ref{thm:complete} are those that are nested, i.e., $C_\alpha(x) \subseteq C_{\alpha'}(x)$ for $\alpha \geq \alpha'$.  This is an intuitively appealing property, but certainly one could entertain confidence regions which are not nested.  In the proof of Theorem~\ref{thm:complete}, this property was used only to ensure that the support sets $\{S_\alpha(\vartheta): \alpha \in (0,1)\}$ were also nested, which was one of the conditions imposed in Theorem~\ref{thm:valid}.  But even this latter theorem does not strictly require nested support sets.  Indeed, validity holds for more general supports, but Theorem~3 in \citet{imbasics} says that any IM based on a predictive random set with a non-nested support can be made more efficient with the choice of a nested random set.  In other words, IMs based on non-nested random sets are inadmissible in the usual decision-theoretic sense, hence my choice here to focus on nested confidence regions and the corresponding nested predictive random sets.  

The compatibility condition \eqref{eq:compatible} in Theorem~\ref{thm:complete}, on the other hand, is less transparent.  To provide some further insight into this condition, below I describe three different scenarios, some general and some more specific.  

First, consider the special case where both the statistical model and the confidence region have a certain transformation structure.  Let $\G$ be a group of transformations $g: \XX \to \XX$, with associated group $\Gbar$, such that $X \sim \prob_\theta$ implies $gX \sim \prob_{\bar g \theta}$; as is customary in this context, I write $gx$ instead of $g(x)$, and assume that $\Gbar$ acts transitively on $\Theta$.  Next, define another associated group $\Gtilde$ such that $\phi(g\theta) = \tilde g \phi(\theta)$.  Then a confidence region $C_\alpha$ for $\phi$ is {\em invariant} if $C_\alpha(gx) = \tilde C_\alpha(x)$.  \citet{arnold1984} gives further details on this setup, along with some examples.  For the present context, the key point is that the model can be described by first picking any ``baseline'' value of $\theta$, say, 0, identifying the mapping $g_\theta$ that converts $U \sim \prob_0$ to $X \sim \prob_\theta$.  Therefore, the association \eqref{eq:basic.assoc} looks like $X=g_\theta U$, with $U \sim \prob_0$, and the support sets $S_\alpha(\theta)$ are free of $\theta$, i.e., 
\[ S_\alpha(\theta) = \{u: C_\alpha(g_\theta u) \ni \phi(\theta)\} 
= \{u: C_\alpha(u) \ni \phi(0)\}. \]
Therefore, no gluing is required in the IM construction in the proof of Theorem~\ref{thm:complete}.  Moreover, if the invariance structure is considered after reducing to a minimal sufficient statistic, as in Arnold, then, e.g., $\G$ acting transitively on $\XX$ implies $\Theta_x(u) \neq \varnothing$ for all $(x,u)$, hence \eqref{eq:compatible}.  

Second, the compatibility condition is trivial whenever $\Theta_x(u)$ is non-empty for every pair $(x,u)$.  This would hold for most ``regular'' problems when $X$ represents a minimal sufficient statistic of the same dimension as $\theta$, there are no non-trivial constraints on the parameter space, and the confidence region $C_\alpha$ is a function of only the minimal sufficient statistic.  Many problems are of this type; see Section~\ref{S:examples}.  

Third, regularity is not necessary for compatibility to hold.  Suppose that $Y_1,\ldots,Y_n$ are iid $\unif(\theta,\theta+1)$, and define the minimal sufficient statistic $X=(X_1,X_2)$, the sample minimum and maximum, respectively.  Since $X$ is two-dimensional but $\theta$ is a scalar, the model is considered to be ``non-regular.''  Here I will consider a Bayesian approach with a flat prior for $\theta$; then the posterior is $\unif(X_2-1, X_1)$, and the equi-tailed $100(1-\alpha)$\% credible interval is 
\[ C_\alpha(x) = \bigl[x_1 - (1-d_x)(1-\tfrac{\alpha}{2}), \, x_1 - (1-d_x) \tfrac{\alpha}{2} \bigr], \quad d_x = x_2 - x_1, \]
and it is nested and satisfies the coverage condition \eqref{eq:coverage} with equality.  The natural association is $X=\theta + U$, where $U=(U_1,U_2)$ denotes the minimum and maximum of an iid sample of size $n$ from $\unif(0,1)$.  With this choice, 
\begin{align*}
S_\alpha(\vartheta) & = \{(u_1,u_2): [u_1 + \vartheta - (1-d_u)(1-\tfrac{\alpha}{2}), \, u_1 + \vartheta - (1-d_u)\tfrac{\alpha}{2}] \ni \vartheta \} \\
& = \{(u_1, u_2): \tfrac{\alpha}{2-\alpha} (1-u_2) \leq u_1 \leq \tfrac{2-\alpha}{\alpha} (1-u_2) \}.
\end{align*}
Note that any $(u_1,u_2)$ with $u_1 \leq u_2$ and $u_1 = 1-u_2$ belongs to $S_\alpha(\vartheta)$ for all $\alpha$ and for all $\vartheta$.  In particular, if $\hat\theta(x) = \frac12\{x_1 + (x_2-1)\}$ and $u(x)=x - \hat\theta(x) 1_2$, then 
\[ u(x) \in S_\alpha(\vartheta) \quad \text{and} \quad \Theta_x(u(x)) = \{\hat\theta(x)\} \neq \varnothing, \]
which implies \eqref{eq:compatible}.  

To summarize, the compatibility condition seems to be rather weak and, in fact, I have not been able to find a scenario where it fails.  On the other hand, however, there are many degrees of freedom in the choice of $C_\alpha$, so I have not been able to prove that compatibility always holds either.

Finally, Theorem~\ref{thm:complete} suggests that the IM's marginal plausibility regions can in some cases be proper subsets of the given confidence regions.  The binomial example in Section~\ref{SS:binomial} illustrates this.  In cases with a vector $\theta$ with, say, a scalar interest parameter $\phi=\phi(\theta)$, it is common for confidence regions $C_\alpha$ to be conservative at particular $\theta$ values.  However, strict inequality in \eqref{eq:subset} requires that $C_\alpha$ be {\em uniformly conservative} for $\theta$ having a given value of $\phi$, which is less common.  Even confidence regions that are conservative for some parameter values are typically motivated by their efficient performance for other parameter values.  For example, in the mixed-effects model setting considered by \citet{e.hannig.iyer.2008}, their simulations show that certain confidence intervals 
for the random-effect variance are conservative under some parameter settings but efficient under others.  So, a given $C_\alpha$ being conservative at a particular $\theta$ does not imply that the corresponding IM-based plausibility region will be smaller; a type of uniform conservatism is required for automatic improvement.  The Behrens--Fisher illustration in Section~\ref{SS:bf} provides some more details.


\subsection{On connections to the Bayesian literature}

A fundamental result in Bayesian analysis is the probability matching property, namely, in certain cases, the Bayesian posterior credible regions will be first- or higher-order accurate in the sense that their frequentist coverage probability will be within a suitably narrow range around the target nominal value; see \citet{datta.mukherjee.book} and \citet{mghosh2011} for a review, and see \citet{fraser2011} and \citet{fraser.etal.2016} for some limitations.  One interpretation of these results is that there exists a prior such that, asymptotically, the corresponding Bayesian credible regions will roughly agree with a standard frequentist confidence region based on, say, likelihood theory.  This is a type of complete-class theorem, similar to others that can be found in the literature \citep[e.g.,][Chap.~8]{berger1985}, generally providing some frequentist justification for certain Bayesian methods.  But having to choose a particular prior in order to get valid credible regions strips away almost all that is genuinely Bayesian about this approach, so a characterization of confidence through a Bayesian argument seems out of reach.  Compare this to  Theorems~\ref{thm:valid}--\ref{thm:valid.fuse} that give sufficient conditions for IMs to yield valid plausibility regions, and Theorem~\ref{thm:complete} that says all valid confidence regions correspond to an IM.  


\subsection{On implications for the IM framework}

Confidence regions are used in all areas of statistical applications as they provide a means for uncertainty quantification.  So being able to identify all reasonable confidence regions as corresponding to output from a valid IM has some important implications for the IM framework itself.  That is, there is no systematic bias incurred by adopting the IM approach in the sense that there is no ``good answer'' that cannot be reached through the use of IMs.  Some potential users of IMs may complain that the A- and P-steps of the IM construction require certain subjective choices, but Theorem~\ref{thm:complete} indicates that this is not a restriction: if the user would be happy with a valid Bayesian credible region, then there is an IM that would achieve that, as well as an ``algorithm'' for finding it.  It is important to emphasize, however, that the IM approach is constructive and does not rely on there being an already-known ``good answer'' to start with or compare to.  That is, one can start from a given sampling model and, through the three-steps described in Section~\ref{S:source}, directly construct a valid IM that may produce results different than existing Bayesian or frequentist solutions.  In other words, not only are the user's choices in the A- and P-steps not a restriction, they actually provide flexibility, though not so much flexibility that the user could inadvertently violate the essential validity property.  To conclude this remark, I want to emphasize that the IM approach is a general framework, a fundamental extension to Fisher's work on fiducial, and should not be viewed just as a method for constructing confidence regions or other frequentist procedures.

\section{Examples}
\label{S:examples}


\subsection{Binomial problem}
\label{SS:binomial}

Let $X \sim \bin(n,\theta)$, where the size $n$ is known but the success probability $\theta \in [0,1]$ is unknown.  A standard confidence interval for $\theta$, satisfying the coverage probability condition \eqref{eq:coverage}, is the Clopper--Pearson interval
\[ C_\alpha(x) = \{\theta: F_\theta(x) \geq \tfrac{\alpha}{2}, \, 1-F_\theta(x-1) \geq \tfrac{\alpha}{2}\}, \quad \alpha \in (0,1), \]
where $F_\theta$ is the binomial distribution function.  It is well-known \citep[e.g.,][Figure~11]{bcd2001} that the coverage probability of $C_\alpha$ varies wildly as a function of $\theta$ so it does not have the nice ``uniform exactness'' properties that pivot-based intervals enjoy.  Therefore, the IM plausibility region derived using the strategy in the proof of Theorem~\ref{thm:complete} may provide some improvement to the Clopper--Pearson interval.  

To start, I will take the association in \eqref{eq:basic.assoc} as 
\[ F_\theta(X-1) < U \leq F_\theta(X), \quad U \sim \unif(0,1). \]
This is a standard formula for simulating a binomial variate, and I write the solution $X$ as $F_\theta^{-1}(U)$, which can be computed via the {\tt qbinom} function in R.  Then the support set $S_\alpha(\theta)$ is given by 
\[ S_\alpha(\theta) = \{u: F_\theta(F_\theta^{-1}(u)) \geq \tfrac{\alpha}{2}, \, 1-F_\theta(F_\theta^{-1}(u)-1) \geq \tfrac{\alpha}{2}\}. \]
The set $\UU_x(\theta) = \{u: F_\theta(x-1) < u \leq F_\theta(x)\}$ is an interval, and I need the supremum of all $\alpha$ such that $S_\alpha(\theta) \cap \UU_x(\theta) \neq \varnothing$.  This value is the index $\alpha(x,\theta)$ in \eqref{eq:alpha} which, in this case, is given by 
\[ \alpha(x,\theta) = \begin{cases}
2\{1 - F_\theta(x-1)\}, & \text{if $F_\theta(x-1) \geq \frac12$} \\
2 F_\theta(x), & \text{if $F_\theta(x) \leq \frac12$} \\
1, & \text{if $F_\theta(x-1) < \frac12 < F_\theta(x)$}.
\end{cases} \]
After adjusting for the non-zero width of the smallest $S_{\alpha(x,\theta)}(\theta)$, the fused IM plausibility function is 
\[ \pl_x(\{\theta\} \mid \theta) = \begin{cases}
1, & \text{if $F_\theta(x-1) < \frac12 < F_\theta(x)$} \\
1 - g(\theta,x) , & \text{otherwise}.
\end{cases} \]
where
\[ g(\theta, x) = \prob_U\{F_\theta(F_\theta^{-1}(U)) > \tfrac{\alpha(x,\theta)}{2}, \, F_\theta(F_\theta^{-1}(U)-1) < 1-\tfrac{\alpha(x,\theta)}{2}\}, \]
which can be readily computed numerically, with or without Monte Carlo. 

As an illustration, consider an experiment with $n=25$ trials and $X=17$ observed successes.  Figure~\ref{fig:binom} shows a plot of the plausibility contour extracted directly from the Clopper--Pearson confidence interval, which is actually just $\alpha(x,\theta)$, and that for the fused IM based on the construction in the proof of Theorem~\ref{thm:complete}.  Note that the IM-based plausibility contour is no wider than that from Clopper--Pearson, which demonstrates the potential improvement from the particular IM construction.  Finally, I mention that this is not the only IM-based plausibility interval available for this binomial problem; e.g., \citet{plausfn} derives one from a likelihood-based IM.

\begin{figure}[t]
\begin{center}
\scalebox{0.70}{\includegraphics{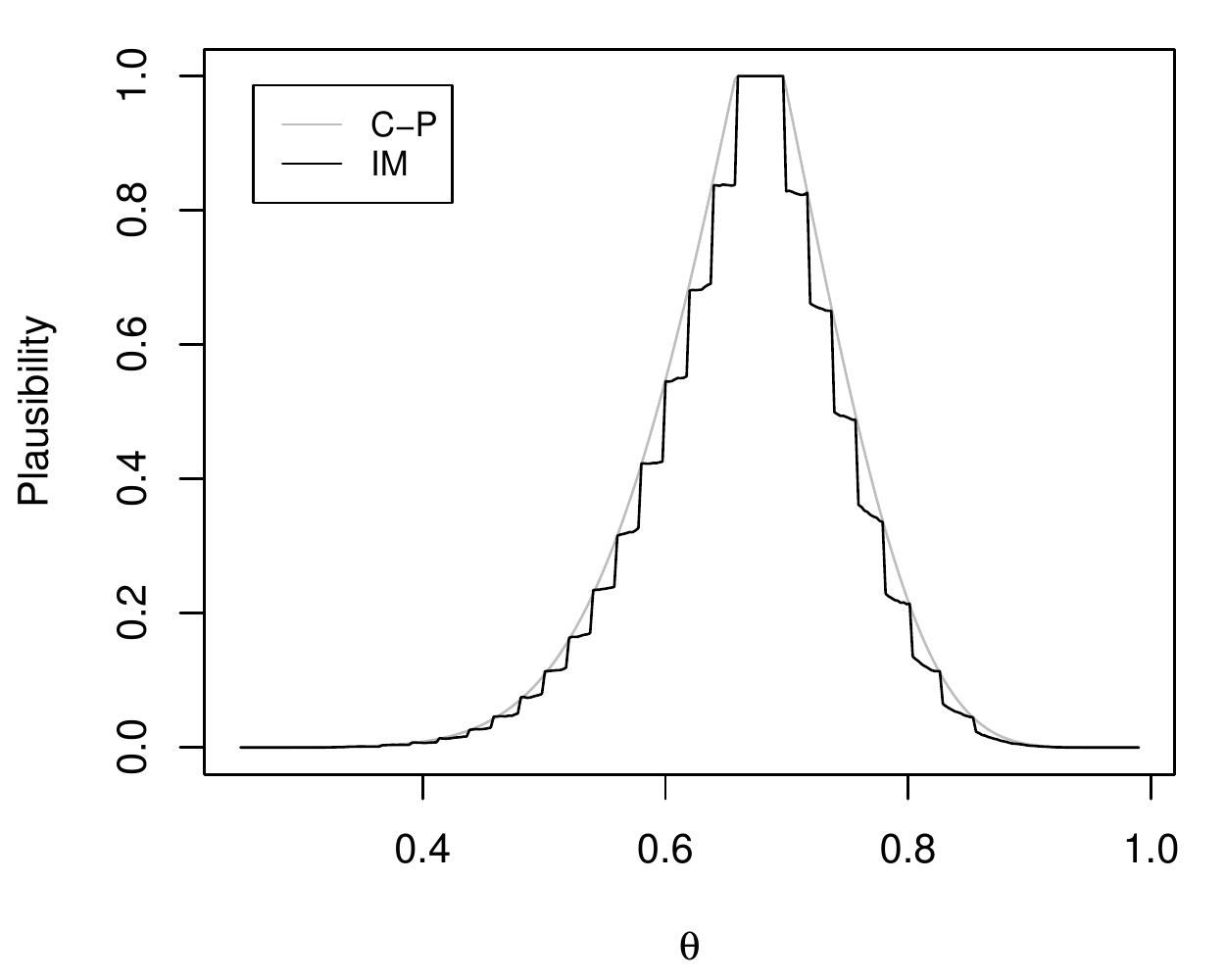}}
\end{center}
\caption{Plot of the fust IM and the Clopper--Pearson (C--P) plausibility contours for the binomial problem, with $n=25$ and $X=17$.}
\label{fig:binom}
\end{figure}

\subsection{Behrens--Fisher problem}
\label{SS:bf}

Consider two independent data sets $Y_{11},\ldots,Y_{1n_1}$ and $Y_{21},\ldots,Y_{2n_2}$ from $\nm(\mu_1, \sigma_1^2)$ and $\nm(\mu_2, \sigma_2^2)$, respectively, where $\theta=(\mu_1,\mu_2,\sigma_1^2, \sigma_2^2)$ is unknown, but the goal is inference on $\phi(\theta) = \mu_1-\mu_2$.  Let $X=(M_1, M_2, V_1, V_2)$ denote the minimal sufficient statistic, consisting of the two sample means and two sample variances, respectively.  Then the famous Hsu--Scheff\'e interval \citep{hsu1938, scheffe1970} is given by 
\[ C_\alpha(x) = (m_1 - m_2) \pm t_{\alpha,n}^\star \, f(v_1, v_2), \]
where $f(v_1, v_2) = (v_1/n_1 + v_2/n_2)^{1/2}$ and $t_{\alpha,n}^\star$ is the $1-\frac{\alpha}{2}$ quantile of a Student-t distribution with $\min\{n_1,n_2\} - 1$ degrees of freedom.  It follows from results in, e.g.,  \citet{mickey.brown.1966} that the above interval achieves the coverage probability condition \eqref{eq:coverage}, but could be somewhat conservative for small $n$ and/or certain $\theta$ configurations.  Here I derive the plausibility counterpart for $C_\alpha$ as in the proof of Theorem~\ref{thm:complete}.  

For the association \eqref{eq:basic.assoc}, I will take 
\[ D = \phi + f(\sigma_1^2, \sigma_2^2) \, U_1, \quad V_k = \sigma_k^2 U_{2k}, \quad k=1,2, \]
where $D=M_1-M_2$ and the auxiliary variable $U=(U_1,U_{21},U_{22})$ consists of independent components with $U_1 \sim \nm(0,1)$ and $U_{2k} \sim \chisq(n_k-1) / (n_k - 1)$, $k=1,2$.  Since 
\[ \frac{D - \phi}{f(V_1,V_2)} = \frac{f(\sigma_1^2, \sigma_2^2)}{f(\sigma_1^2 U_{21}, \sigma_2^2 U_{22})} \, U_1, \]
the support sets $S_\alpha(\theta)$ can be written as  
\[ S_\alpha(\theta) = \Bigl\{ u=(u_1, u_{21}, u_{22}): \frac{|u_1|}{\{\lambda_\theta u_{21} + (1-\lambda_\theta) u_{22}\}^{1/2}} \leq t_{\alpha, n}^\star \Bigr\}, \]
where $\lambda_\theta = \{1 + (n_1 \sigma_1^2)/(n_2 \sigma_2^2)\}^{-1}$, which takes values in $[0,1]$ as a function of $\theta$.  Note that the set $S_\alpha(\theta)$ depends on $\theta$ only through $\lambda_\theta$, not on $\phi$ or on the specific values of $\sigma_1^2$ and $\sigma_2^2$.  Then the index $\alpha(x,\theta)$ in \eqref{eq:alpha} is given by 
\[ \alpha(x,\theta) = 2 \bigl| F_n\bigl( t(x,\theta) \bigr) - 1 \bigr|, \]
where $t(x,\theta) = (d - \phi)/f(v_1,v_2)$ and $F_n$ is the Student-t distribution function with $\min\{n_1,n_2\}-1$ degrees of freedom, which actually only depends on $\phi$.  Then the fused IM plausibility function satisfies 
\[ \pl_x(\{\theta\} \mid \theta) = \prob_U\{S_{\alpha(x,\theta)}(\theta)\}, \]
which can be readily evaluated via Monte Carlo; then the corresponding marginal plausibility function for $\phi$ in \eqref{eq:mpl} can be obtained by optimization over $\lambda$.  

For illustration, consider the example in \citet[][p.~83]{lehmann1975} on travel times for two different routes; summary statistics are:
\begin{align*}
n_1 & = 5 & m_1 & = 7.580 & v_1 & = 2.237 \\
n_2 & = 11 & m_2 & = 6.136 & v_2 & = 0.073.
\end{align*}
The goal is to compare the mean travel times for two different routes.  Figure~\ref{fig:bf} shows a plot of the plausibility contour $\alpha(x,\theta)$ for $\phi$ extracted directly from the Hsu--Scheff\'e confidence interval, the plausibility contour above for a range of $\lambda$, and the corresponding marginal plausibility based on optimizing over these $\lambda$.  As expected, the $\lambda$-dependent plausibilities (gray lines) are individually more efficient than that of Hsu--Scheff\'e.  However, marginalizing over $\lambda$ via optimization widens the plausibility contours to agree exactly with Hsu--Scheff\'e, as predicted by Theorem~\ref{thm:complete}.  

\begin{figure}[t]
\begin{center}
\scalebox{0.70}{\includegraphics{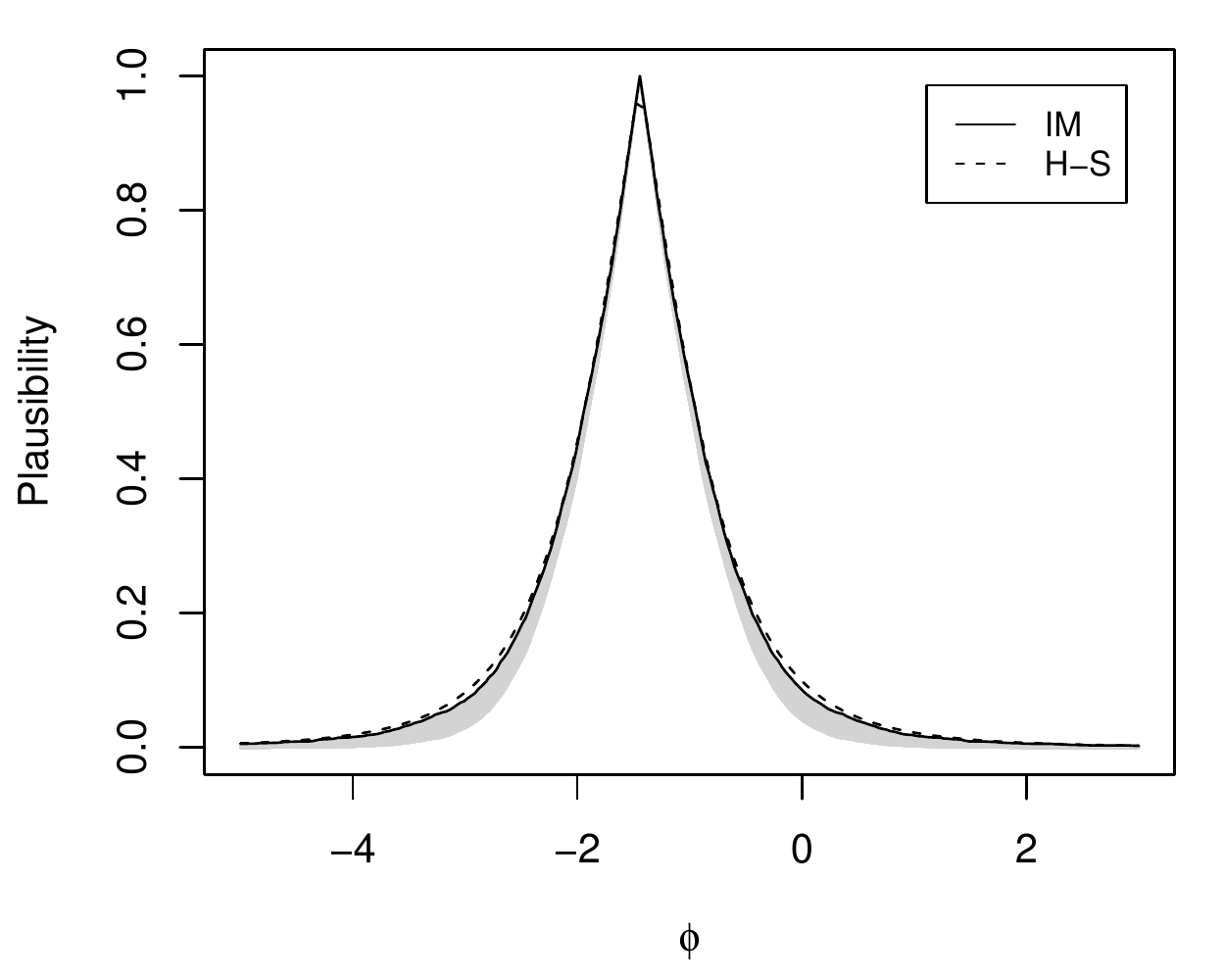}}
\end{center}
\caption{Plausibility functions for the Hsu--Scheff\'e interval (H--S) and for the derived IM in the travel times illustration of the Behrens--Fisher problem. Differences between the solid and dashed lines are the result of optimizing over only a finite range of $\lambda$.}
\label{fig:bf}
\end{figure}

\subsection{A nonparametric problem}

Although the developments in the previous sections focus primarily on finite-dimensional or parametric models, there is no reason that they cannot be applied to infinite-dimensional or nonparametric problems just the same.  As a simple illustration, consider real-valued data $X=(X_1,\ldots,X_n)$ iid with distribution function $F$, where the goal is inference on the ``parameter'' $F$.  A standard nonparametric confidence region is based on the Dvoretsky--Kiefer--Wolfowitz inequality \citep[DKW,][]{dkw1956} and corresponds to a ball with respect to sup-norm $\|\cdot\|_\infty$, i.e., 
\[ C_\alpha(x) = \{F: \|\hat F_x - F\|_\infty \leq \delta_{n,\alpha}\}, \]
where $\hat F_x$ denotes the empirical distribution function based on a sample $x=(x_1,\ldots,x_n)$, and $\delta_{n,\alpha} = \{\log(2/\alpha) / (2n)\}^{1/2}$.  Equivalently, $C_\alpha(x)$ can be viewed as a confidence band, giving pointwise lower and upper confidence limits; see Figure~\ref{fig:np} below.  Since the DKW inequality upon which the coverage probability results for $C_\alpha$ are based holds for all $F$, there will be some $F$ at which the coverage is conservative, so there is reason to expect that the IM-based confidence region derived from this will be smaller and less conservative, similar to the binomial case presented above.  

To keep things simple here, I will write $F^{-1}$ for the inverse of a distribution function $F$, and ignore the possible non-uniqueness due to $F$ flattening out on some interval.  Then a natural association is 
\[ X_i = F^{-1}(U_i), \quad i=1,\ldots,n, \]
where $U=(U_1,\ldots,U_n)$ are iid $\unif(0,1)$.  Writing this in vectorized form, i.e., $X=F^{-1}(U)$, it follows that 
\[ S_\alpha(F) = \{u: C_\alpha(F^{-1}(u)) \ni F\} = \{u: \|\hat F_{F^{-1}(u)} - F\|_\infty \leq \delta_{n,\alpha}\}, \]
and it is easy to confirm that $S_\alpha(F) \equiv S_\alpha$ does not depend on $F$, hence there is no need for a fused IM.  Furthermore, the index $\alpha(x,F)$ defined in the proof of Theorem~\ref{thm:complete} is 
\[ \alpha(x,F) = \min\bigl\{ 1, 2 e^{-2n \|\hat F_x - F\|_\infty^2} \bigr\}. \]
Therefore, the plausibility contour for a singleton assertion $\{F\}$ is 
\[ \pl_x(\{F\}) = 1-\prob_U\{S_{\alpha(x,F)}\} \]
which can readily be evaluated via Monte Carlo for any fixed $F$.  

For an illustration, consider the nerve data set from \citet{cox.lewis.book}, analyzed in \citet[][Example~2.1]{wasserman2006book} on $n=799$ waiting times between pulses along a nerve fiber.  A plot of the empirical distribution function along with the lower and upper 95\% confidence bands based on the DKW inequality is shown in Figure~\ref{fig:np}.  While it is difficult to draw a new pair of confidence bands corresponding to IM-based plausibility analysis, it is simple to verify the claim that the bands would indeed be narrower.  Take the lower bound in Figure~\ref{fig:np} as a candidate $F$; then $\alpha(x,F) \approx 0.05$ but $\pl_x(\{F\}) \approx 0$ and, therefore, the 95\% lower confidence bound based on the DKW inequality is not included in the 95\% plausibility region.

\begin{figure}[t]
\begin{center}
\scalebox{0.70}{\includegraphics{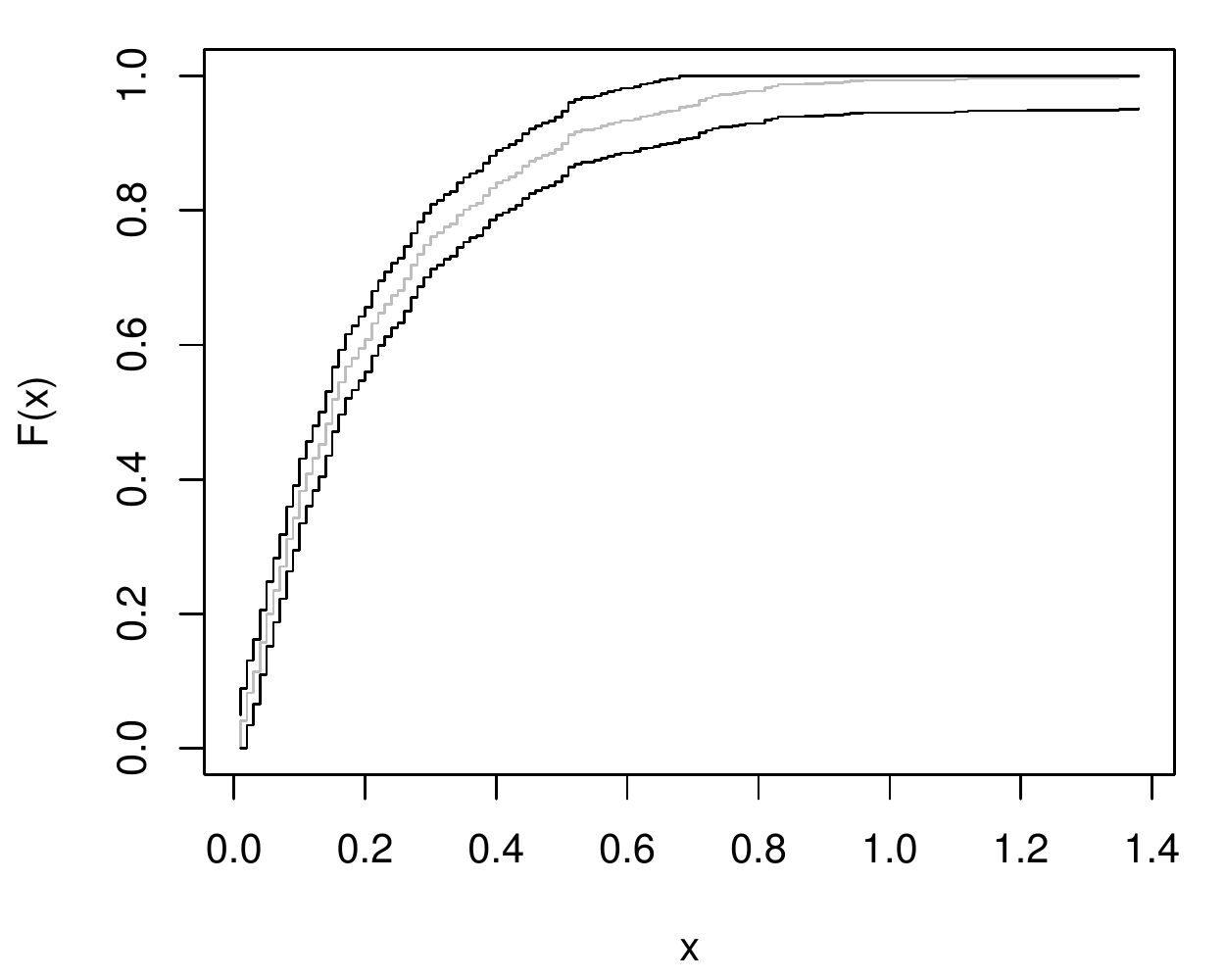}}
\end{center}
\caption{Empirical distribution function (gray) and lower and upper 95\% confidence bands (black) for the nerve data example.}
\label{fig:np}
\end{figure}

\section{Discussion}
\label{S:discuss}

Confidence is one of the most fundamental and widely used concepts in all of statistics, and while the majority of statisticians would agree that confidence is different from probability, there seems to be no agreement about what specifically confidence is.  The goal of this paper was to provide a rigorous characterization of confidence via suitable belief/plausibility functions.  This goal was achieved by showing that every suitable confidence region corresponds to the plausibility region of a valid IM.  This characterization provides at least two important contributions.  

First, statisticians have tried (e.g., ASA statement on p-values, etc) to drive home the standard/textbook interpretation of confidence, but without much success.  It is easy for statisticians to complain that practitioners ``just don't get it,'' but it would be more productive to consider the possibility that this textbook explanation is not clear or otherwise not fully satisfactory.  The fact is, despite our efforts to fit confidence into a familiar framework, probability simply does not give an adequate characterization.  ``Plausibility intervals'' is a natural and intuitively clear way to describe confidence intervals to non-statisticians, and there is now no reason to shy away from this explanation since the present paper provides a rigorous justification for it.  

Second, it establishes that the IM framework is more than just an alternative to Bayes, frequentist, fiducial, etc.  At the recent {\em BFF4} meeting at Harvard University---see \citet{xl.bff.2017} for details, including explanation of the name---a part of Nancy Reid's invited talk summarized each of the different approaches discussed at the meeting, with a one-line quote from the respective advocates of those approaches.  For the IM framework, Professor Reid summarized my position by ``IMs are the only answer,'' and the results in the present paper provide a justification for this point of view: if there is a good answer, then there is an IM solution that is the same or better.



\bibliographystyle{apalike}
\bibliography{/Users/rgmarti3/Dropbox/Research/mybib}

\end{document}